\newtheorem{theorem}{Theorem}[section]
\newtheorem{corollary}[theorem]{Corollary}
\newtheorem{definition}[theorem]{Definition}
\newtheorem{lemma}[theorem]{Lemma}
\newtheorem{example}[theorem]{Example}
\newtheorem{remark}[theorem]{Remark}
\newcommand{\Z}{\mathbb{Z}}
\newcommand{\Q}{\mathbb{Q}}
\newcommand{\C}{\mathbb{C}}
\newcommand{\lk}{\operatorname{lk}}
\newcommand{\fr}{\operatorname{fr}}
\newcommand{\writhe}{\operatorname{wr}}
\newcommand{\sgn}{\operatorname{sgn}}
\title{A new polynomial criterion for periodic knots}
\author{Maciej Markiewicz}
\address{Institute of Mathematics, University of Warsaw, ul. Banacha 2, 02-097 Warsaw, Poland}
\email{ma.markiewicz3@uw.edu.pl}
\thanks{MM ORCID: 0000-0002-8743-2476}
\author{Wojciech Politarczyk}
\address{Institute of Mathematics, University of Warsaw, ul. Banacha 2, 02-097 Warsaw, Poland}
\email{wpolitarczyk@mimuw.edu.pl}
\thanks{WP ORCID: 0000-0002-4844-4327}
\subjclass[2010]{primary: 57M25.} 
\keywords{periodic links, HOMFLY-PT polynomial, Kauffman polynomial.}
\begin{document}

\maketitle

\begin{abstract}
  The purpose of this paper is to present a new periodicity criterion.
  For that purpose, we study the HOMFLY-PT polynomial and the Kauffman polynomial of cables of periodic links.
  Furthermore, we exhibit a couple of examples for which our criterion is stronger than many previously know criteria, among which is the Khovanov homology criterion of Borodzik and the second author.
\end{abstract}
\section{Introduction}

We say that a link is $r$-periodic, for some \(r>1\), if it is invariant under a semi-free action of the finite cyclic group~$\mathbb{Z}_{r}$ on~$S^{3}$ and disjoint from the fixed-point set.
It is natural to ask whether a given link invariant can be used to detect the existence of a periodic symmetry of a link or lack thereof.
In particular, polynomial link invariants were the first choice for obtaining periodicity criteria.
For example, \cite{davisAlexanderPolynomialsEquivariant2006,davisAlexanderPolynomialsPeriodic1991,Mu} obstruct periodicity with the aid of the Alexander polynomial.
Similarly, many periodicity criteria use the Jones polynomial~\cite{yokotaJonesPolynomialPeriodic1991a,murasugiJonesPolynomialsPeriodic1988} and HOMFLY-PT polynomial~\cite{Tr,Prz}.
For a comprehensive survey  polynomial invariants of periodic links, refer to~\cite{przytycki-survey}.
Similarly, one can use link homology theories to devise periodicity criteria~\cite{politarczykEquivariantJonesPolynomials2017,BP,JN}.

The main goal of the paper is to give new periodicity criteria in terms of the HOMFLY-PT~polynomial~\cite{HOMFLY,PT} and the Kauffman~polynomial~\cite{Ka}.
Our approach is based on an observation that if a knot \(K\) is periodic, then so are suitable cables of \(K\).

Before we state the main theorem, we recall the skein relations defining the HOMFLY-PT polynomial and the Kauffman polynomial to avoid any confusion resulting from different conventions existing in the literature.
Let \(\mathcal{R}\) denote the subring of the Laurent polynomial ring \(\Z[a^{\pm1},z^{\pm1}]\) generated by \(a^{\pm1},z, \frac{a+a^{-1}}{z}\).
The \emph{HOMFLY-PT polynomial}~\(P_{L}(a,z) \in \mathcal{R}\), is the invariant of oriented links characterized by the following properties \cite[Theorem 15.2]{Lickorish}
\begin{align*}
  P_{U}(a,z) &= 1, \\
  aP_{L_{+}}(a,z)+a^{-1}P_{L_{-}}(a,z) &= zP_{L_{0}}(a,z). 
\end{align*}
Above, \(U\) denotes the unknot and $L_{+},L_{-},L_{0}$ are links which are identical except in a neighborhood of a single crossing where they look as depicted in~Figure~\ref{fig:skein-relation}.
For a diagram $D$ of a link $L$, the \emph{Kauffman polynomial} $\Lambda_{D}\in\mathcal{R}$ satisfies \cite[Theorem 15.3, 15.5]{Lickorish}
\begin{align}
  \Lambda_{U}(a,z) &=1, \nonumber \\
  \Lambda_{D_{+}}(a,z) + \Lambda_{D_{-}}(a,z)&=z(\Lambda_{D_{0}}(a,z)+\Lambda_{D_{\infty}}(a,z)), \nonumber\\
  \Lambda_{D_{\pm1}}&=a^{\pm1}\Lambda_{D}. \nonumber
\end{align}
Above, we denote by $U$ the unknot and $D_{+},D_{-},D_{0},D_{\infty},D_{\pm1}$ are diagrams which locally look as in Figure~\ref{fig:kauffman-relation} and are identical otherwise.
The \emph{Kauffman polynomial} of a link $L$ is then defined by 
\[F_{L}(a,z)=a^{-\writhe_{D}(L)}\Lambda_{D}(a,z),\]
for any diagram $D$ of $K$, where $\writhe_{D}(L)$ is the writhe of the diagram.

\begin{figure}[t]
  \centering
  \def\svgscale{0.3}
  \begingroup%
  \makeatletter%
  \providecommand\color[2][]{%
    \errmessage{(Inkscape) Color is used for the text in Inkscape, but the package 'color.sty' is not loaded}%
    \renewcommand\color[2][]{}%
  }%
  \providecommand\transparent[1]{%
    \errmessage{(Inkscape) Transparency is used (non-zero) for the text in Inkscape, but the package 'transparent.sty' is not loaded}%
    \renewcommand\transparent[1]{}%
  }%
  \providecommand\rotatebox[2]{#2}%
  \newcommand*\fsize{\dimexpr\f@size pt\relax}%
  \newcommand*\lineheight[1]{\fontsize{\fsize}{#1\fsize}\selectfont}%
  \ifx\svgwidth\undefined%
    \setlength{\unitlength}{827.81680462bp}%
    \ifx\svgscale\undefined%
      \relax%
    \else%
      \setlength{\unitlength}{\unitlength * \real{\svgscale}}%
    \fi%
  \else%
    \setlength{\unitlength}{\svgwidth}%
  \fi%
  \global\let\svgwidth\undefined%
  \global\let\svgscale\undefined%
  \makeatother%
  \begin{picture}(1,0.33846169)%
    \lineheight{1}%
    \setlength\tabcolsep{0pt}%
    \put(0,0){\includegraphics[width=\unitlength,page=1]{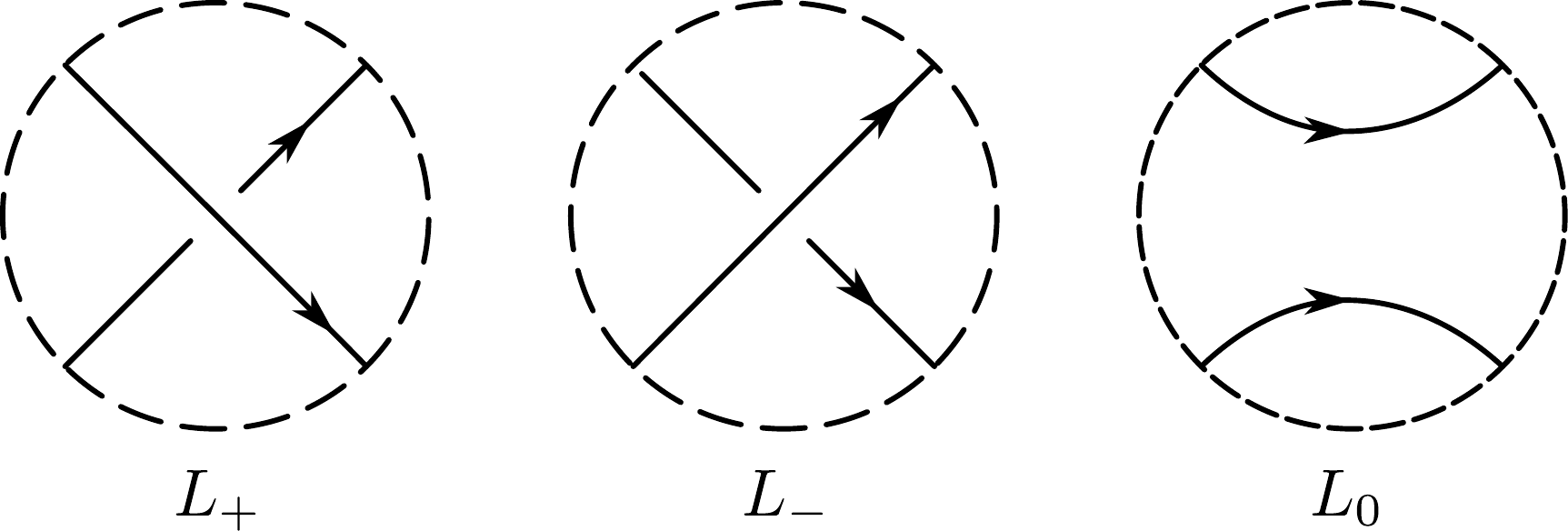}}%
  \end{picture}%
  \endgroup%
  \caption{Positive crossing, negative crossing, and the oriented resolution, respectively.}
  \label{fig:skein-relation}
\end{figure}

\begin{figure}
  \centering
  \def\svgscale{0.3}
  \begingroup%
  \makeatletter%
  \providecommand\color[2][]{%
    \errmessage{(Inkscape) Color is used for the text in Inkscape, but the package 'color.sty' is not loaded}%
    \renewcommand\color[2][]{}%
  }%
  \providecommand\transparent[1]{%
    \errmessage{(Inkscape) Transparency is used (non-zero) for the text in Inkscape, but the package 'transparent.sty' is not loaded}%
    \renewcommand\transparent[1]{}%
  }%
  \providecommand\rotatebox[2]{#2}%
  \newcommand*\fsize{\dimexpr\f@size pt\relax}%
  \newcommand*\lineheight[1]{\fontsize{\fsize}{#1\fsize}\selectfont}%
  \ifx\svgwidth\undefined%
    \setlength{\unitlength}{1127.05919803bp}%
    \ifx\svgscale\undefined%
      \relax%
    \else%
      \setlength{\unitlength}{\unitlength * \real{\svgscale}}%
    \fi%
  \else%
    \setlength{\unitlength}{\svgwidth}%
  \fi%
  \global\let\svgwidth\undefined%
  \global\let\svgscale\undefined%
  \makeatother%
  \begin{picture}(1,0.51255441)%
    \lineheight{1}%
    \setlength\tabcolsep{0pt}%
    \put(0,0){\includegraphics[width=\unitlength,page=1]{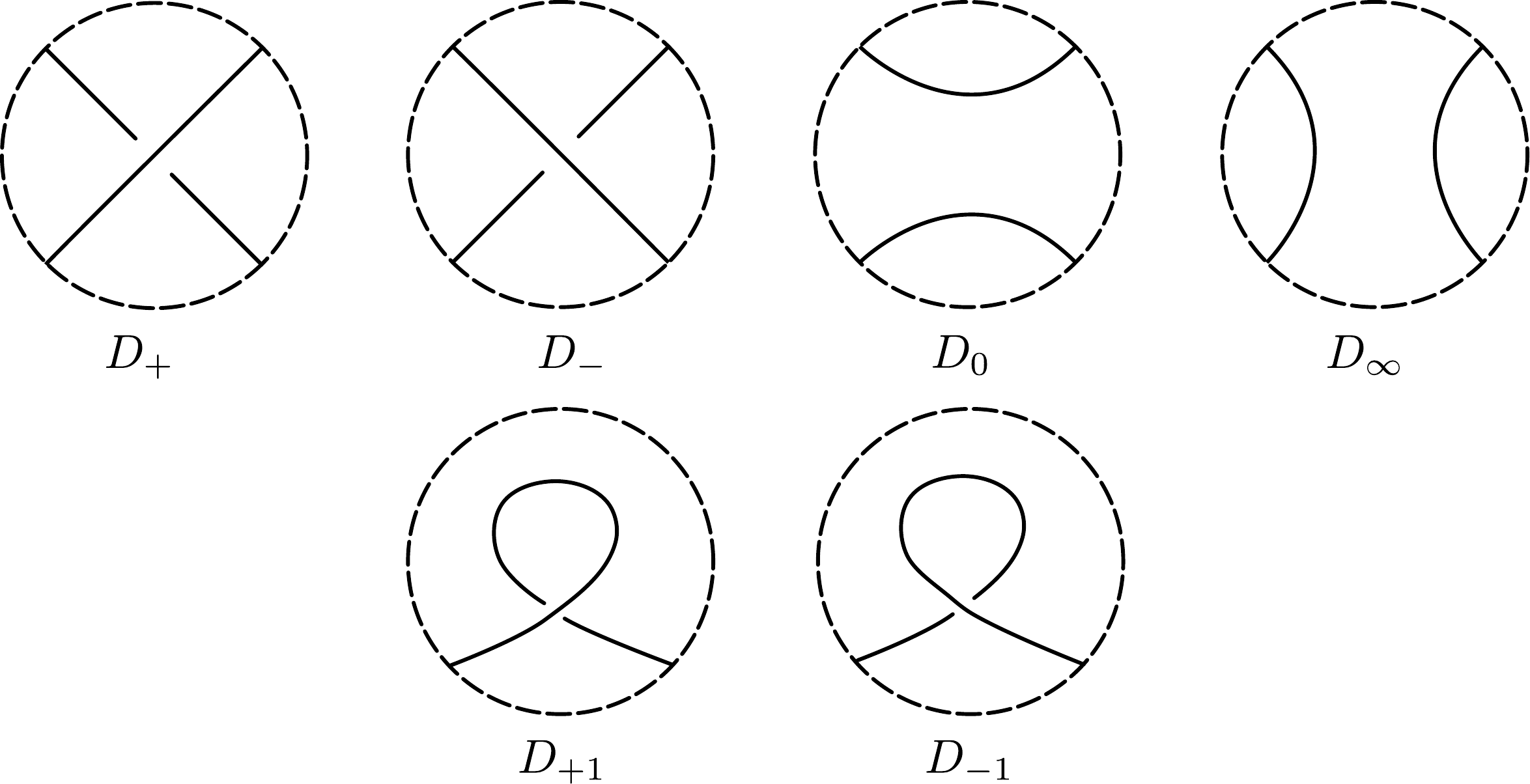}}%
  \end{picture}%
  \endgroup%
  \caption{Link diagrams involved in the definition of Kauffman polynomial.}
  \label{fig:kauffman-relation}
\end{figure}

Let \(K\) be an \(r\)-periodic knot, for some \(r>1\), and suppose that \(K\) is equipped with a framing, i.e., a diffeomorphism \(\varphi \colon S^{1} \times D^{2} \to N(K) \subset S^{3}\) and \(\varphi(S^1\times \{0\})=K\), where \(N(K)\) is a \(\Z_{r}\)-invariant closed tubular neighborhood of \(K\).
We say that \(\varphi\) is \(\Z_{r}\)-\emph{equivariant} if the action induced by \(\varphi\) on the solid torus is trivial on the \(D^{2}\) factor.
For any \(m \geq 1\), we will denote by \(K^{(m)}_{\varphi}\) the \(\varphi\)-\emph{framed} \(m\)-\emph{cable} of \(K\), i.e., the \(m\)-component cable link of \(K\) formed by the image of \(m\) parallel and unlinked copies of the core of the solid torus via \(\varphi\), see Section~\ref{sec:periodic-cables} for the precise definition.

For a prime \(p\) and an integer \(n>0\) let \(\mathcal{J}_{p^{n}}\) be the ideal in \(\mathcal{R}\) generated by the monomials
\[p^{i} z^{p^{n-i}}, \quad \text{for } i=0,1,\ldots,n.\]

\begin{theorem}\label{thm:main-thm}
  Suppose that \(p\) is a prime and \(n>0\).
  Let $K$ be a $p^{n}$-periodic knot equipped with an equivariant framing \(\varphi\).
  Then the following congruences are satisfied
  \begin{align*}
    P_{K^{(m)}_{\varphi}}(a,z) &\equiv \left(\frac{a+a^{-1}}{z}\right)^{m-1} \cdot P_{K}(a,z)^{m} \pmod{\mathcal{J}_{p^{n}}}, \\
    F_{K^{(m)}_{\varphi}}(a,z) &\equiv d^{m-1} \cdot F_{K}(a,z)^{m} \pmod{\mathcal{J}_{p^{n}}},
  \end{align*}
  where $d=z^{-1}(a+a^{-1})-1$.
\end{theorem}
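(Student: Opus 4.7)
The plan is to prove the theorem by an equivariant skein-theoretic argument. I would first choose a $\mathbb{Z}_{p^n}$-symmetric diagram $D$ of $K$ by projecting $K$ orthogonally to the rotation axis; since the axis is disjoint from $K$, the action permutes the crossings of $D$ freely, so every crossing orbit has size exactly $p^n$. Using the equivariant framing $\varphi$, I would form the cable diagram $D^{(m)}$ in which each arc of $D$ is replaced by $m$ parallel arcs and each crossing by an $m\times m$ grid of same-signed crossings. The action lifts to $D^{(m)}$ by permuting grids in step with its action on crossings of $D$ and acting trivially within each grid; in particular, for every pair $(i,j)$ the set of $(i,j)$-crossings across all grids is a union of $\mathbb{Z}_{p^n}$-orbits of size $p^n$, one per crossing orbit of $D$. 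The target is the diagram $D^{\sqcup m}$ representing $m$ disjoint unlinked copies of $K$, whose HOMFLY-PT polynomial is exactly $\left(\frac{a+a^{-1}}{z}\right)^{m-1}P_{K}^{m}$.

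The crux is the following equivariant skein lemma, which I would establish first: if $\mathcal{O}$ is a $\mathbb{Z}_{p^n}$-orbit of $p^n$ positive crossings in some $\mathbb{Z}_{p^n}$-equivariant diagram, iterating the relation $[L_{+}] = a^{-1}z[L_{0}] - a^{-2}[L_{-}]$ at each crossing of $\mathcal{O}$ gives
\[P_{L} = \sum_{S\subseteq\mathcal{O}}(a^{-1}z)^{|S|}(-a^{-2})^{p^n-|S|}P_{L_S},\]
where $L_S$ is obtained from $L$ by oriented-resolving the crossings in $S$ and switching the rest to negative. Because $P_{L_S}$ depends only on the $\mathbb{Z}_{p^n}$-orbit of $S$, I would group the sum by orbits. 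Under the regular action on $\mathcal{O}\cong\mathbb{Z}_{p^n}$, a subset $S$ with stabilizer of order $p^k$ is a union of cosets of the unique subgroup of that order, so $|S|$ is a multiple of $p^k$ and the orbit of $S$ has size $p^{n-k}$. Every non-empty proper orbit therefore contributes a term divisible by $p^{n-k}z^{p^k}$, a generator of $\mathcal{J}_{p^n}$, and the $S=\mathcal{O}$ term contributes a multiple of $z^{p^n}$, also in $\mathcal{J}_{p^n}$. Only the $S=\emptyset$ term survives, yielding
\[P_{L} \equiv (-a^{-2})^{p^n}P_{L'} \pmod{\mathcal{J}_{p^n}},\]
where $L'$ switches all crossings in $\mathcal{O}$ from $+$ to $-$.

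With the lemma in hand, I would iterate it orbit-by-orbit over the inter-copy (off-diagonal) crossings of $D^{(m)}$, transforming $P_{K^{(m)}_{\varphi}}$ modulo $\mathcal{J}_{p^n}$ into the polynomial of a diagram with all inter-copy crossings switched, multiplied by accumulated factors $(-a^{\pm 2})^{p^n}$. The equivariance of $\varphi$ is then invoked to argue that these powers of $a$ are precisely the writhe correction needed to identify the modified diagram with $D^{\sqcup m}$, finishing the HOMFLY-PT congruence. The Kauffman case is entirely parallel: using $\Lambda_{D_{+}} = -\Lambda_{D_{-}} + z(\Lambda_{D_0}+\Lambda_{D_{\infty}})$, there are three choices $\{-,0,\infty\}$ per crossing, and grouping $\mathbb{Z}_{p^n}$-orbits of assignments in $\{-,0,\infty\}^{\mathcal{O}}$ gives the same divisibility bound $p^{n-k}z^{p^k}\in\mathcal{J}_{p^n}$; the writhe normalization $F = a^{-\writhe}\Lambda$ then converts the resulting framed congruence for $\Lambda$ to the unframed one stated for $F$. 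The hardest step will be the final geometric bookkeeping: verifying that the iterated skein switches, combined with the accumulated $a$-power factors, really produce a diagram isotopic to $D^{\sqcup m}$. This is where the $\mathbb{Z}_{p^n}$-equivariance of $\varphi$ enters essentially, as it forces the numbers of positive and negative orbits modified across the cable to balance symmetrically.
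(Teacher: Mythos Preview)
Your strategy matches the paper's: establish a crossing-change congruence for periodic links and iterate it to split the cable into $m$ disjoint copies of $K$. The paper quotes Przytycki's result (Lemma~\ref{lemma:MnLm}) that $P_L\equiv P_{L'}$ and $F_L\equiv F_{L'}\pmod{\mathcal{J}_{p^n}}$ whenever $L'$ arises from a $p^n$-periodic $L$ by switching a single orbit of crossings, and then peels off one cable component at a time via the split-union formulas $P_{A\sqcup B}=\tfrac{a+a^{-1}}{z}P_AP_B$ and $F_{A\sqcup B}=dF_AF_B$.

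The gap is in your version of that lemma. Your orbit-sum yields $P_L\equiv(-a^{-2})^{p^n}P_{L'}$ for a positive orbit (and $(-a^{2})^{p^n}$ for a negative one), not $P_L\equiv P_{L'}$. You then propose to cancel the accumulated $a$-powers as a ``writhe correction,'' but $P$ is an invariant of oriented, not framed, links: once the inter-component crossings are switched, the resulting diagram already represents $K^{\sqcup m}$, and its HOMFLY-PT polynomial is exactly $\bigl(\tfrac{a+a^{-1}}{z}\bigr)^{m-1}P_K^m$ with nothing to correct. Nor do the positive and negative orbits you switch ``balance symmetrically'' merely because $\varphi$ is equivariant; the net $a$-exponent you accumulate involves $\fr(\varphi)$, and equivariance only gives $p^n\mid\fr(\varphi)$, not $\fr(\varphi)=0$. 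The same defect appears in the Kauffman case, since $F$ is already writhe-normalized and your $\Lambda$-expansion still leaves a stray unit factor after normalization. Passing from your weighted congruence to the factor-free form of Przytycki's lemma is precisely the content you are missing; the paper sidesteps this by citing the lemma rather than re-deriving it. (A smaller correction: under the alternating-orientation convention adopted here for cable components, the $m\times m$ grid over a base crossing is \emph{not} same-signed; the sign at position $(i,j)$ is $(-1)^{i+j}$ times the base sign.)
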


\begin{remark}
  Observe that the congruences in Theorem~\ref{thm:main-thm} hold in the quotient ring~\(\mathcal{R} / \mathcal{J}_{p^n}\) and not in~\(\Z[a^{\pm1},z^{\pm1}] / \mathcal{J}_{p^n}\Z[a^{\pm1},z^{\pm1}]\).
  Indeed, observe that \(z^{p^n} \in \mathcal{J}_{p^n}\) is invertible in \(\Z[a^{\pm1},z^{\pm1}]\), hence the latter quotient is trivial.
\end{remark}

As an immediate corollary, we obtain:

\begin{corollary}\label{cor:independence-of-framings}
  Suppose that \(p\) is a prime and \(n>0\).
  Let \(K\) be a \(p^{n}\)-periodic knot and let \(\varphi_{1}\) and \(\varphi_{2}\) be two equivariant framings of \(K\), then for any \(m>1\),
  \begin{align*}
    P_{K^{(m)}_{\varphi_{1}}}(a,z) &\equiv P_{K^{(m)}_{\varphi_{2}}}(a,z) \pmod{\mathcal{J}_{p^{n}}}, \\
    F_{K^{(m)}_{\varphi_{1}}}(a,z) &\equiv F_{K^{(m)}_{\varphi_{2}}}(a,z) \pmod{\mathcal{J}_{p^{n}}}.
  \end{align*}
\end{corollary}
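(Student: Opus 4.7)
The plan is to deduce the corollary directly from Theorem~\ref{thm:main-thm} by transitivity of congruence, since the right-hand sides in both congruences of the theorem depend only on $K$ (through $P_K$ or $F_K$) and on $m$, but not on the chosen equivariant framing $\varphi$.

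More precisely, I would first apply Theorem~\ref{thm:main-thm} to the framing $\varphi_1$, obtaining
\[
  P_{K^{(m)}_{\varphi_{1}}}(a,z) \equiv \left(\frac{a+a^{-1}}{z}\right)^{m-1} \cdot P_{K}(a,z)^{m} \pmod{\mathcal{J}_{p^{n}}},
\]
and then apply it again to $\varphi_2$ to get the same expression on the right. Subtracting (equivalently, using transitivity of the congruence relation modulo the ideal $\mathcal{J}_{p^n}\subset \mathcal{R}$) yields the desired congruence for the HOMFLY-PT polynomial. The argument for the Kauffman polynomial is formally identical, with $\left(\tfrac{a+a^{-1}}{z}\right)^{m-1}P_K^{m}$ replaced by $d^{m-1}F_K^m$.

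Since the argument is purely formal given the main theorem, there is no genuine obstacle: the only thing to notice is that the right-hand sides in Theorem~\ref{thm:main-thm} are manifestly independent of $\varphi$, so whatever differences exist between the cables $K^{(m)}_{\varphi_1}$ and $K^{(m)}_{\varphi_2}$ at the level of the actual polynomials in $\mathcal{R}$ must be absorbed into $\mathcal{J}_{p^n}$. Thus the corollary follows immediately.
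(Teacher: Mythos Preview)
Your argument is correct and matches the paper's own treatment: the corollary is stated there as an immediate consequence of Theorem~\ref{thm:main-thm}, precisely because the right-hand sides of the congruences depend only on $K$ and $m$, not on the equivariant framing. There is nothing to add.
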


Recall that by setting \(a = -iq^{-2}\) and \(z = i(q^{-1}-q)\) in the HOMFLY-PT polynomial, we recover the~\emph{reduced Jones polynomial}, \(J_{L}(q) \in \Z[q^{\pm1}]\), which is characterized by the following properties
\begin{align}
  J_{U}(q) &= 1, \\
  q^{-2}J_{L_{+}}(q) - q^{2}J_{L_{-}}(q) &= (q^{-1}-q)J_{L_{0}}(q), \label{eq:Jo-skein-relation} 
\end{align}
Therefore, Theorem~\ref{thm:main-thm} specializes to the following corollary.

\begin{corollary}\label{cor:JoThm}
  Let $K$ be a $p^{n}$-periodic knot with an equivariant framing \(\varphi\).
  Suppose that $\mathcal{I}_{p^{n}}$ is the ideal in $\Z[q^{\pm1}]$ generated by monomials
  \[p^{n}(q^{2}-1),p^{n-1}(q^{2}-1)^{p},...,p(q^{2}-1)^{p^{n-1}},(q^{2}-1)^{p^{n}}.\]
  Then, for any \(m>1\), the following congruence is satisfied
  \[J_{K^{(m)}_{\varphi}}(q)\equiv (q+q^{-1})^{m-1}J_{K}(q)^{m} \pmod{\mathcal{I}_{p^{n}}}.\]
\end{corollary}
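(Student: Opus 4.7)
The plan is to derive this corollary by specializing the HOMFLY-PT congruence of Theorem~\ref{thm:main-thm} through the substitution that realizes the reduced Jones polynomial. Let $\phi \colon \mathcal{R} \to \Z[i, q^{\pm 1}]$ be the ring homomorphism induced by $a \mapsto -iq^{-2}$, $z \mapsto i(q^{-1}-q)$; by construction, $\phi(P_L(a,z)) = J_L(q)$ for every link $L$.

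A direct calculation yields $\phi\!\bigl(\tfrac{a+a^{-1}}{z}\bigr) \in \{\pm(q+q^{-1})\}$ and, for each generator of $\mathcal{J}_{p^n}$,
\[ \phi\!\left(p^i z^{p^{n-i}}\right) \;=\; \varepsilon_i \cdot p^i (q^2-1)^{p^{n-i}}, \]
where $\varepsilon_i \in \Z[i, q^{\pm 1}]^{\times}$ is an explicit unit (a product of powers of $i$, $-1$, and $q$). Applying $\phi$ term-by-term to the HOMFLY-PT congruence of Theorem~\ref{thm:main-thm} thus yields an equality in $\Z[i, q^{\pm 1}]$ of the form
\[ J_{K^{(m)}_\varphi}(q) \;-\; (q+q^{-1})^{m-1} J_K(q)^m \;=\; \sum_{i=0}^n r_i \cdot \varepsilon_i \cdot p^i (q^2-1)^{p^{n-i}}, \]
with $r_i \in \phi(\mathcal{R})$, the sign from $\phi\!\bigl(\tfrac{a+a^{-1}}{z}\bigr)^{m-1}$ being absorbed by the standard conventions used in defining the Jones polynomial.

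To conclude, observe that the left-hand side lies in $\Z[q^{\pm 1}]$, and project both sides onto $\Z[q^{\pm 1}]$ along the decomposition $\Z[i, q^{\pm 1}] = \Z[q^{\pm 1}] \oplus i \cdot \Z[q^{\pm 1}]$. This leaves the left-hand side unchanged and produces on the right a $\Z[q^{\pm 1}]$-linear combination of the monomials $p^i(q^2-1)^{p^{n-i}}$, which after reindexing $j = n-i$ are exactly the generators of $\mathcal{I}_{p^n}$. The desired congruence in $\Z[q^{\pm 1}]$ modulo $\mathcal{I}_{p^n}$ follows immediately.

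The only delicate step is tracking the units $\varepsilon_i$ and the powers of $i$ carefully, so that projection onto the real part of each summand produces a genuine $\Z[q^{\pm 1}]$-multiple of a generator of $\mathcal{I}_{p^n}$; this is routine bookkeeping rather than a substantive obstacle.
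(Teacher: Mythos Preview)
Your approach is exactly what the paper intends: the paper gives no proof beyond the sentence ``Therefore, Theorem~\ref{thm:main-thm} specializes to the following corollary,'' and you supply the mechanics of that specialization. Your observation that the substitution lands in $\Z[i,q^{\pm1}]$ rather than $\Z[q^{\pm1}]$, and your use of the $\Z[q^{\pm1}]$-linear projection along $\Z[i,q^{\pm1}] = \Z[q^{\pm1}] \oplus i\,\Z[q^{\pm1}]$ to descend, is a genuine detail the paper skips entirely; it is correct because each $p^{i}(q^{2}-1)^{p^{n-i}}$ already lies in $\Z[q^{\pm1}]$, so only the coefficients $r_{i}\varepsilon_{i}$ need to be projected.

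The one soft spot is the phrase ``the sign \ldots\ being absorbed by the standard conventions.'' With the paper's stated skein relation $aP_{+}+a^{-1}P_{-}=zP_{0}$ and substitution $a=-iq^{-2}$, $z=i(q^{-1}-q)$, one computes $\phi\bigl(\tfrac{a+a^{-1}}{z}\bigr)=-(q+q^{-1})$, whereas $J_{U_{2}}(q)=q+q^{-1}$; so the paper's conventions are not internally consistent on this point, and your appeal to ``conventions'' is covering for a genuine sign discrepancy in the source rather than a step you have actually carried out. This is bookkeeping, as you say, and once a consistent pair (skein relation, substitution) is fixed the argument goes through cleanly---but it would be better to write down the correct substitution explicitly and verify $\phi(P_{U_{2}})=J_{U_{2}}$ rather than wave at it.
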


Note that Theorem~\ref{thm:main-thm}, Corollary~\ref{cor:independence-of-framings} and Corollary~\ref{cor:JoThm} give an easily-applicable periodicity criterion provided that we can identify candidates for equivariant framings.
This characterization is given in Lemma~\ref{lem:admissible-framings}.
Consequently, Theorem~\ref{thm:main-thm}, Corollary~\ref{cor:independence-of-framings} or Corollary~\ref{cor:JoThm} coupled with Lemma~\ref{lem:admissible-framings} can be applied to rule out the possible periods in some cases.
In particular, these criteria can be applied to prove that the knots~\(12n749\) and~\(15n124640\) are not \(7\)-periodic.
These examples are interesting because many previously known criteria, including the criterion using the Khovanov homology~\cite{BP}, were ineffective in these cases.
For details refer to Section~\ref{sec:comp-with-other}.

Many periodicity criteria are ineffective for small periods, see~\cite[Lemma 4.5]{Prz}.
Unfortunately, Corollary~\ref{cor:JoThm} has also some limitations.
To be more precise, the criterion is ineffective for periods \(2\) and \(3\).
These limitations are discussed in detail in Section~\ref{sec:limitations}.

The paper is organized as follows.
In Section~\ref{sec:periodic-cables}, we discuss properties of cables of periodic links and prove Theorem~\ref{thm:main-thm}.
In Section~\ref{sec:comp-with-other} we verify that the knots~\(12n749\) and~\(15n124640\) are not \(7\)-periodic.
In Section~\ref{sec:limitations} we discuss limitations of our periodicity criteria.
Appendix~\ref{sec:code} contains the Sage source code that we used for computations in Section~\ref{sec:comp-with-other}.

\subsection*{Acknowledgments}
The paper is an abridged version of the Master's Thesis of the first author prepared under the supervision of the second author.
We are also grateful to Maciej Borodzik for his suggestion for improving the manuscript.
MM and WP were supported by the National Science Center grant 2016/22/E/ST1/00040.

\section{Periodic cables of periodic knots and the proof of main theorem}
\label{sec:periodic-cables}

Throughout the paper we assume, unless otherwise stated, that all knots are oriented.

\begin{definition}
  Let \(r>1\) be a positive integer.
  We say that a knot $K$ in $S^{3}$ is \emph{$r$-periodic} if it is invariant under a semi-free action of \(\Z_{r}\), i.e., the finite cyclic group of order \(r\), on $S^{3}$.
  Moreover, we require that $K$ is disjoint from the fixed point set.
\end{definition}

Because of the resolution of the Smith conjecture~\cite{Mo}, a knot is \(r\)-periodic if it admits an \emph{\(r\)-periodic diagram}, i.e., a diagram that is invariant under a rotation of~$\mathbb{R}^{2}$ of order~$r$.

Let \(K\) be a knot and let \(\varphi\) be a framing of \(K\), i.e., a diffeomorphism~\(\varphi \colon S^{1} \times D^{2} \to N(K)\) identifying a closed tubular neighbourhood of~\(K\) with a solid torus.
Notice that~\(\varphi\) determines a longitude \(\lambda = \varphi(S^{1} \times \{1\})\) for~\(K\).
Define the \emph{framing coefficient} \(\fr(\varphi) = \lk(K,\lambda)\).
It is a standard fact that the framing coefficient characterizes~\(\varphi\) up to isotopy, i.e.,\ \(\varphi_{1}\) is isotopic to \(\varphi_{2}\) if, and only if, \(\fr(\varphi_{1}) = \fr(\varphi_{2})\).

\begin{example}
  Let~\(D\) be a diagram of \(K\).
  Recall that~\(D\) determines a framing of~\(K\), the~\emph{blackboard framing} of~\(D\), denoted by~\(\varphi_{D}\).
  For the blackboard framing we have \(\fr(\varphi_{D}) = \writhe_{D}(K)\), where~\(\writhe_{D}(K)\) denotes the \emph{writhe} of~\(D\), i.e.,\ the signed count of crossings of~\(D\), see Figure~\ref{fig:skein-relation}.
\end{example}

For~\(m>1\), the \emph{\(\varphi\)-framed \(m\)-cable of \(K\)} is the \(m\)-component link
\[K^{(m)}_{\varphi} = \varphi\left(S^{1} \times \{0,1/m,2/m,\ldots,(m-1)/m\} \right) \subset N(K) \subset S^3.\]
We orient components of \(K^{(m)}_{\varphi}\) in such a way that the orientation of \(\varphi(S^{1}\times\frac{k}{m})\), for \(k=0,1,\ldots,m-1\), agrees with the orientation of~$K$ for even~\(k\) and is opposite otherwise.
For fixed~\(K\) and~\(m\), the isotopy class of~\(K_{\varphi}^{(m)}\) depends only on the isotopy class of \(\varphi\), hence it is completely determined by \(\fr(\varphi)\).


Let \(K\) be an \(r\)-periodic knot.
A framing \(\varphi\) of \(K\) is \(\Z_{r}\)-\emph{equivariant} if the image of \(\varphi\) is \(\Z_{r}\)-invariant in \(S^{3}\) and for any \(g \in \Z_{r}\) and we have
\[g \cdot \varphi(z,x) = \varphi\left(e^{\frac{2\pi i \ell}{r}} \cdot z, x\right), \quad \ell = 1,2,\ldots,r-1,\]
where \((z,x) \in S^{1} \times D^{2} \subset \C \times \C\).
Observe that the \(\phi\)-framed cable link of \(K\), where \(\phi\) is \(\Z_r\)-equivariant, is a \(\Z_r\)-periodic link such that every component is preserved by the action of \(\Z_r\). 
The converse is also true -- by~\cite[Theorem 8.6]{Meeks-Scott} if \(K^{(m)}_{\varphi}\) is \(r\)-periodic and each component is preserved by the group action, then \(K\) is \(r\)-periodic and \(\varphi\) is \(\Z_{r}\)-equivariant.

The next lemma characterizes equivariant framings of periodic knots in terms of the framing coefficient.

\begin{lemma}\label{lem:admissible-framings}
  Let \(K\) be an \(r\)-periodic knot.
  A framing \(\varphi\) is isotopic to a \(\Z_{r}\)-equivariant framing \(\varphi'\) if, and only if, \(\fr(\varphi)\) is divisible by \(r\).
\end{lemma}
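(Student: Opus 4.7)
The plan is to reduce the problem to the quotient knot $\bar K \subset S^{3}/\Z_{r}$ via the branched covering map $\pi\colon S^{3} \to S^{3}/\Z_{r}$. By the resolution of the Smith conjecture the fixed axis $A \subset S^{3}$ is unknotted, so the quotient is again $S^{3}$ and $\pi$ is branched over $A$. Since the isotopy class of a framing is determined by its framing coefficient, the lemma reduces to showing that the set of coefficients $\fr(\varphi)$ attained by $\Z_{r}$-equivariant framings is exactly $r\Z$.

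For the forward direction I would first observe that if $\varphi$ is equivariant then the longitude $\lambda = \varphi(S^{1} \times \{1\})$ is $\Z_{r}$-invariant, so it descends to a longitude $\bar \lambda = \pi(\lambda)$ of $\bar K = \pi(K)$, inducing a framing $\bar \varphi$ of $\bar K$. I would then choose a Seifert surface $\bar \Sigma \subset S^{3}$ for $\bar K$ transverse to both $A$ and $\bar \lambda$, and argue that its preimage $\Sigma = \pi^{-1}(\bar \Sigma)$ is an oriented surface bounding $K$ (orientability uses that the action is by orientation-preserving diffeomorphisms). A local-diffeomorphism argument away from $A$ then yields the covering formula
\[ \fr(\varphi) = \lk(K,\lambda) = \Sigma \cdot \lambda = r\,(\bar \Sigma \cdot \bar \lambda) = r\,\fr(\bar \varphi), \]
showing $r \mid \fr(\varphi)$.

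For the converse, I would produce an equivariant framing with any prescribed coefficient in $r\Z$ by lifting framings from the quotient. An equivariant tubular neighborhood $N(K)$ is obtained by averaging, and its quotient $N(\bar K) = N(K)/\Z_{r}$ is a tubular neighborhood of $\bar K$; since the $\Z_{r}$-action on $N(K)$ is free, the restriction $N(K) \to N(\bar K)$ is a connected unbranched $r$-fold cover of solid tori, hence conjugate to the model cover $(z,w) \mapsto (z^{r}, w)$ of $S^{1} \times D^{2}$ on which the deck group rotates the $S^{1}$ factor. Thus any framing $\bar \varphi$ of $\bar K$ lifts to an equivariant framing of $K$ with coefficient $r\,\fr(\bar \varphi)$, and since $\fr(\bar \varphi)$ ranges over all of $\Z$ this produces the full set $r\Z$. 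The isotopy classification of framings then finishes the argument.

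I expect the main obstacle to be a careful verification of the covering formula $\fr(\varphi) = r\,\fr(\bar \varphi)$: one must check that $\pi^{-1}(\bar \Sigma)$ is truly an oriented surface with boundary exactly $K$ (rather than a disconnected multi-cover of $\bar K$), and that signed intersection numbers with $\lambda$ multiply by $r$ rather than cancelling. Both points rest on the facts that $\Z_{r}$ acts on $S^{3}$ by orientation-preserving diffeomorphisms and that $K$ and $\lambda$ are disjoint from the branch axis $A$; once these are in place the conclusion is essentially formal.
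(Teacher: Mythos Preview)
Your argument is correct, but it takes a genuinely different route from the paper's proof. The paper works entirely with periodic diagrams: it chooses an $r$-periodic diagram $D$ of $K$, observes that the blackboard framing $\varphi_{D}$ is $\Z_{r}$-equivariant with $\fr(\varphi_{D}) = \writhe_{D}(K)$ divisible by $r$ (since the crossings of $D$ fall into free $\Z_{r}$-orbits), and then reaches every multiple of $r$ by performing Reidemeister~I moves along an entire $\Z_{r}$-orbit of points, which keeps the diagram periodic and shifts the writhe by exactly $\pm r$. This is shorter and avoids any discussion of the branched cover or Seifert surfaces.

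Your approach instead passes to the quotient knot $\bar K$ via the branched cover $\pi\colon S^{3} \to S^{3}/\Z_{r}$ and proves the exact formula $\fr(\varphi) = r\,\fr(\bar\varphi)$ by lifting a Seifert surface and counting intersections; surjectivity onto $r\Z$ then comes from lifting framings of $\bar K$. This is more conceptual and yields slightly more information (an explicit correspondence between equivariant framings of $K$ and all framings of $\bar K$), at the cost of the extra care you correctly flag---checking that $\pi^{-1}(\bar\Sigma)$ is a genuine oriented Seifert surface for $K$ and that intersection signs are preserved. The paper's diagrammatic proof sidesteps these issues entirely; on the other hand, your covering formula would transfer unchanged to settings where one does not have such convenient periodic diagrams.
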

\begin{proof}
  Let \(D\) be an \(r\)-periodic diagram of \(K\).
  The blackboard framing \(\varphi_{D}\) is \(\Z_{r}\)-equivariant and satisifies the desired condition.
  Notice that we can perform an equivariant Reidemeister I move on \(D\), i.e., we pick an orbit of points on \(D\) and perform a Reidemeister I move at each point in such a way that the result is a different \(r\)-periodic diagram of \(K\).
  Notice that such an operation changes the writhe by \(\pm r\).
  Hence, the lemma follows.
\end{proof}

Let \(p\) be a fixed prime and let \(n>0\).
Throughout this section we restrict our attention to \(p^n\)-periodic knots.
Recall that \(\mathcal{R} \subset \Z[a^{\pm1},z^{\pm1}]\) is the subring generated by \(a^{\pm1}\), \(\frac{a+a^{-1}}{z}\), and \(z\).
Moreover, \(\mathcal{J}_{p^n}\) denote the ideal in \(\mathcal{R}\) generated by monomials \(p^{i} z^{p^{n-i}}\), for \(i=0,1,\ldots,n\).
The proof of Theorem~\ref{thm:main-thm} relies on the following technical lemma.

\begin{lemma}[{\cite[Lemma 2.1., Lemma 3.2.]{Prz}}]
  \label{lemma:MnLm}
  Let $L$ be a $p^n$-periodic link.
  Suppose that $L'$ is a link obtained from $L$ by an equivariant crossing change (i.e. we perform crossing cross changes along some orbit of crossings).
  Then the following congruences are satisfied in \(\mathcal{R}\)
  \begin{align*}
    P_{L}(a,z) &\equiv P_{L'}(a,z)\ \pmod{\mathcal{J}_{p^n}}, \\
    F_{L}(a,z) &\equiv F_{L'}(a,z)\ \pmod{\mathcal{J}_{p^n}}.
  \end{align*}
\end{lemma}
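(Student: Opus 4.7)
The plan is to apply the relevant skein relation iteratively to the $p^n$ crossings in the chosen orbit $O$ and then exploit the $\Z_{p^n}$-equivariance together with the arithmetic of $\mathcal{J}_{p^n}$. Since $\Z_{p^n}$ acts freely on the complement of its fixed axis and $L$ is disjoint from this axis, the orbit satisfies $|O|=p^n$, and since the action is by rotation the crossings in $O$ share a common sign, which we may take to be positive. Applying the HOMFLY-PT rewrite $P_{L_-} = -a^2 P_{L_+} + az\,P_{L_0}$ once at each crossing of $O$ then expresses
\[
  P_{L'} \;=\; \sum_{T\subseteq O}(-a^2)^{p^n-|T|}(az)^{|T|}\,P_{L[T]},
\]
where $L[T]$ has the crossings in $T$ oriented-resolved and the remaining crossings of $O$ kept positive.

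I would then group this sum by $\Z_{p^n}$-orbits on the power set of $O$. The stabilizer of any $T$ is a subgroup of $\Z_{p^n}$, hence some $\Z_{p^{n-j}}$, so $T$ is a union of $\Z_{p^{n-j}}$-orbits on $O$ and $|T|$ is a positive multiple of $p^{n-j}$. The orbit sum for such a $T$ contributes $p^j(-a^2)^{p^n-|T|}(az)^{|T|}P_{L[T]}$; since $|T|\ge p^{n-j}$, the scalar factor $p^j z^{|T|}$ is a multiple of the generator $p^j z^{p^{n-j}}$ of $\mathcal{J}_{p^n}$. The $T=O$ term contains the factor $z^{p^n}\in\mathcal{J}_{p^n}$ directly. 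The only remaining contribution is the $T=\emptyset$ term $(-a^2)^{p^n}P_L$, so the proof reduces to establishing $(-a^2)^{p^n}\equiv 1\pmod{\mathcal{J}_{p^n}}$.

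The key observation is that $-a^2-1 = -a(a+a^{-1}) = -az\cdot u$, where $u=(a+a^{-1})/z\in\mathcal{R}$. The binomial theorem then gives
\[
  (-a^2)^{p^n} - 1 \;=\; \sum_{k=1}^{p^n}\binom{p^n}{k}(-azu)^k,
\]
and, by Kummer's theorem, $v_p\!\binom{p^n}{k}=n-v_p(k)$; therefore the $k$-th summand is a multiple of $p^{n-v_p(k)} z^k$, which in turn is a multiple of the generator $p^{n-v_p(k)} z^{p^{v_p(k)}}$ of $\mathcal{J}_{p^n}$, so each summand lies in $\mathcal{J}_{p^n}$. The Kauffman case runs on the same scheme, with the expansion now indexed by pairs $(T,\sigma)$, $\sigma\colon T\to\{0,\infty\}$; the orbit-counting argument treats the $T\neq\emptyset$ contributions identically, and the $T=\emptyset$ term $(-1)^{p^n}\Lambda_L$ becomes $(-a^2)^{p^n}F_L$ after incorporating the writhe change $\mp 2p^n$ induced by switching $O$. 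I expect the main obstacle to be this final step: the congruence would fail over a generic coefficient ring and works only because the presence of $(a+a^{-1})/z$ in $\mathcal{R}$ forces $a^2+1$ to be divisible by $z$, which is precisely what makes the binomial expansion land inside $\mathcal{J}_{p^n}$.
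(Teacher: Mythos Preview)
The paper does not supply its own proof of this lemma; it simply quotes the result from~\cite{Prz} (Lemmas~2.1 and~3.2 there) and uses it as a black box in the proof of Theorem~\ref{thm:main-thm}. So there is no in-paper argument to compare against.

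Your proof is correct and is, in fact, essentially the argument given by Przytycki. One expands $P_{L'}$ (respectively $\Lambda_{D'}$) via the skein relation over the $p^{n}$ crossings of the orbit, groups the resulting $2^{p^{n}}$ (respectively $3^{p^{n}}$) terms into $\Z_{p^{n}}$-orbits of subsets (respectively decorated subsets) of $O$, and uses orbit--stabilizer to see that each orbit with $T\neq\emptyset$ contributes a multiple of some generator $p^{j} z^{p^{n-j}}$ of $\mathcal{J}_{p^{n}}$. The handling of the $T=\emptyset$ term via the factorization $-a^{2}-1=-az\cdot(a+a^{-1})/z\in z\mathcal{R}$, the binomial expansion, and the valuation $v_{p}\binom{p^{n}}{k}=n-v_{p}(k)$ is exactly the reason for working in $\mathcal{R}$ rather than in $\Z[a^{\pm1},z^{\pm1}]$, and this is the key step in the original proof as well. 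Your observation that in the Kauffman case the writhe correction $a^{\pm 2p^{n}}$ converts the leading $(-1)^{p^{n}}\Lambda_{D}$ into $(-a^{2})^{\pm p^{n}}F_{L}$ is likewise the standard move. In short, you have reconstructed the cited proof.
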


\begin{proof}[Proof of Theorem~\ref{thm:main-thm}]
  Suppose that \(K\) is a \(p^n\)-periodic knot endowed with an equivariant framing \(\varphi\).
  Fix~\(m>1\) and consider the cable link \(L = K^{(m)}_{\varphi}\).
  Consider the component \(L_{m-1} = \varphi(S^{1} \times \{\frac{m-1}{m}\})\) of \(L\).
  We can perform equivariant crossing changes to unlink \(L_{m-1}\) from the other components of \(L\).
  Denote the resulting link by \(L'\).
  Observe that \(L'\) is equivalent to a split union of \(K^{(m-1)}_{\phi}\) and \(K\), hence by Lemma~\ref{lemma:MnLm}
  \begin{align*}
    P_{K^{(m)}_{\varphi}}(a,z) &\equiv \frac{a + a^{-1}}{z} \cdot P_{K^{(m-1)}_{\varphi}}(a,z) \cdot P_{K}(a,z) \pmod{\mathcal{J}_{p^{n}}}, \\
    F_{K^{(m)}_{\varphi}}(a,z) &\equiv d \cdot F_{K^{(m-1)}_{\varphi}}(a,z) \cdot F_{K}(a,z) \pmod{\mathcal{J}_{p^{n}}},
  \end{align*}
  where \(d = z^{-1}(a+a^{-1})-1\).
  Repeating the above procedure inductively, we reach the desired congruences.
\end{proof}

\section{Comparison with other criteria}
\label{sec:comp-with-other}
As pointed out in the Introduction, Theorem~\ref{thm:main-thm}, Corollary~\ref{cor:independence-of-framings}, or Corollary~\ref{cor:JoThm} can be coupled with Lemma~\ref{lem:admissible-framings} to obtain an easily applicable periodicity criterion.
In this section, we will focus on the applications of the criterion coming from Corollary~\ref{cor:JoThm}.
In particular, we will compare its strength with several previously known criteria:
\begin{enumerate}[label=(C-\arabic*)]
\item \label{item:Murasugi-criterion} the Alexander polynomial criterion of Murasugi~\cite{Mu},
\item \label{item:Naik-criterion} the Naik homological criterion~\cite{Na},
\item \label{item:Przytycki-Traczyk-criterion} the mirror image criteria of Przytycki and Traczyk~\cite{Prz,Tr},
\item \label{item:Kh-criterion} the Khovanov homology criterion of Borodzik and Politarczyk~\cite{BP}.
\end{enumerate}
The applications of the criteria listed above to the question of \(5\)-periodicity were discussed in detail in~\cite{BP}. 
In this paper, we are primarily concerned with \(7\)-periodicity.
The examples given here were chosen from the list in \cite[Example 5.3.]{BP}
The knot data was taken from SnapPy~\cite{SnapPy} and~Knot~Atlas~\cite{knotatlas}.

\subsection{\(12n749\) is not \(7\)-periodic}

\begin{figure}
  \centering
  \def\svgscale{0.7}
  \begingroup%
  \makeatletter%
  \providecommand\color[2][]{%
    \errmessage{(Inkscape) Color is used for the text in Inkscape, but the package 'color.sty' is not loaded}%
    \renewcommand\color[2][]{}%
  }%
  \providecommand\transparent[1]{%
    \errmessage{(Inkscape) Transparency is used (non-zero) for the text in Inkscape, but the package 'transparent.sty' is not loaded}%
    \renewcommand\transparent[1]{}%
  }%
  \providecommand\rotatebox[2]{#2}%
  \newcommand*\fsize{\dimexpr\f@size pt\relax}%
  \newcommand*\lineheight[1]{\fontsize{\fsize}{#1\fsize}\selectfont}%
  \ifx\svgwidth\undefined%
    \setlength{\unitlength}{242.74285215bp}%
    \ifx\svgscale\undefined%
      \relax%
    \else%
      \setlength{\unitlength}{\unitlength * \real{\svgscale}}%
    \fi%
  \else%
    \setlength{\unitlength}{\svgwidth}%
  \fi%
  \global\let\svgwidth\undefined%
  \global\let\svgscale\undefined%
  \makeatother%
  \begin{picture}(1,1.15677966)%
    \lineheight{1}%
    \setlength\tabcolsep{0pt}%
    \put(0,0){\includegraphics[width=\unitlength,page=1]{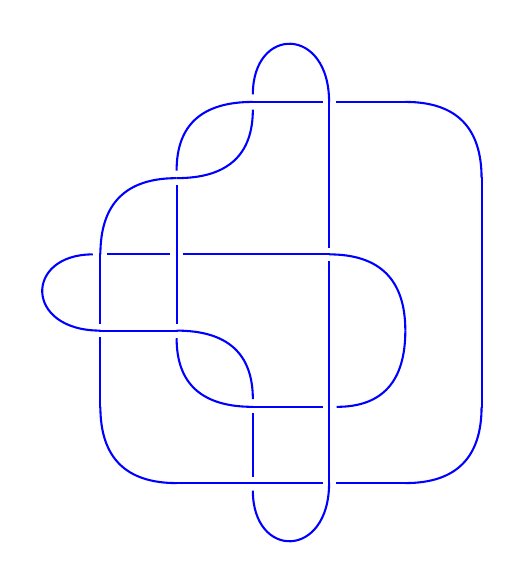}}%
  \end{picture}%
  \endgroup%
  \caption{The knot \(12n749\).}
  \label{fig:12n749}
\end{figure}

Let \(K\) be the knot \(12n749\) depicted in Figure~\ref{fig:12n749}.
We will show that \(K\) is not \(7\)-periodic.
We have
\begin{align*}
  \Delta_{K}(t) = t^{6} - t^{5} + t^{4} - t^{3} + t^{2} - t + 1 \equiv (t+1)^{6} \pmod{7},
\end{align*}
hence \(K\) satisfies~\ref{item:Murasugi-criterion}.
In particular, if \(K\) was \(7\)-periodic, the Alexander polynomial of the quotient knot would be \(\Delta_{0}=1\).
Moreover, \(H_{1}(\Sigma^{2}(K)) \cong \Z_{7}\), which implies that \(K\) satisfies~\ref{item:Naik-criterion}.
Furthermore, we have
\begin{align*}
  J_{K}(q) &= -q^{20} + q^{18} - q^{16} + q^{14} - q^{12} + q^{10} + q^{6}, \\
  P_{K}(a,z) &= a^{6} z^{6} - (a^{8} - 6 a^{6}) z^{4} + (4 a^{8} + 10 a^{6}) z^{2} - 3 a^{8} - 4 a^{6}.
\end{align*}
Consequently,
\begin{align*}
  J_{K}(q) - J_{K}(q^{-1}) &\equiv 0 \pmod{q^{14}-1}, \\
  P_{K}(a,z) - P_{K}(a^{-1},z) &\equiv 0 \pmod{(z^{7}, 7z)},
\end{align*}
where the latter congruence holds in \(\mathcal{R}\) (recall that \(\mathcal{R}\) denotes the subring of \(\Z[a^{\pm1},z^{\pm1}]\) generated by \(a^{\pm1},z,\frac{a+a^{-1}}{z}\)).
Verification of the former congruence is straightforward, while the latter congruence can be easily verified with the aid of~\cite[Lemma 1.5]{Prz}.
Consequently, \(K\) satisfies~\ref{item:Kh-criterion} and~\ref{item:Przytycki-Traczyk-criterion}.

Let \(\varphi\) be the framing of \(K\) such that \(\fr(\varphi) = 7\).
Computations using the code from Appendix~\ref{sec:code} show that
\begin{align*}
  &J_{K^{(2)}_{\varphi}}(q) - (q+q^{-1}) J_{K}^{2}(q) \equiv \\
  &\equiv 5 + 3q^{-2} + q^{-4} + 2q^{-6} + 2q^{-8} - q^{-10} - 4q^{-12} \\
  &\neq 0 \pmod{(7,q^{14}-1)}, 
\end{align*}
hence by Corollary~\ref{cor:JoThm}, \(K\) is not \(7\)-periodic.

\subsection{\(15n124640\) is not \(7\)-periodic}

\begin{figure}
  \centering
  \def\svgscale{0.7}
  \begingroup%
  \makeatletter%
  \providecommand\color[2][]{%
    \errmessage{(Inkscape) Color is used for the text in Inkscape, but the package 'color.sty' is not loaded}%
    \renewcommand\color[2][]{}%
  }%
  \providecommand\transparent[1]{%
    \errmessage{(Inkscape) Transparency is used (non-zero) for the text in Inkscape, but the package 'transparent.sty' is not loaded}%
    \renewcommand\transparent[1]{}%
  }%
  \providecommand\rotatebox[2]{#2}%
  \newcommand*\fsize{\dimexpr\f@size pt\relax}%
  \newcommand*\lineheight[1]{\fontsize{\fsize}{#1\fsize}\selectfont}%
  \ifx\svgwidth\undefined%
    \setlength{\unitlength}{325.19999964bp}%
    \ifx\svgscale\undefined%
      \relax%
    \else%
      \setlength{\unitlength}{\unitlength * \real{\svgscale}}%
    \fi%
  \else%
    \setlength{\unitlength}{\svgwidth}%
  \fi%
  \global\let\svgwidth\undefined%
  \global\let\svgscale\undefined%
  \makeatother%
  \begin{picture}(1,0.86346863)%
    \lineheight{1}%
    \setlength\tabcolsep{0pt}%
    \put(0,0){\includegraphics[width=\unitlength,page=1]{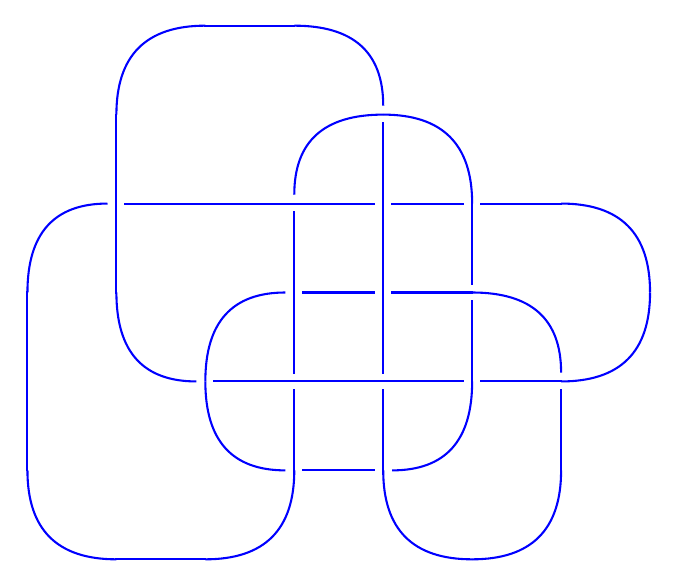}}%
  \end{picture}%
  \endgroup%
  \caption{The knot \(15n124640\).}
  \label{fig:15n124640}
\end{figure}

Consider the knot \(K = 15n124640\), see Figure~\ref{fig:15n124640}.
We will prove that \(K\) is not \(7\)-periodic.

First, notice that \(\Delta_{K}(t) = 1\), hence \(K\) satisfies~\ref{item:Murasugi-criterion} and~\ref{item:Naik-criterion}.
Furthermore, we have
\begin{align*}
  P_{K}(a,z) &= - (a^{2} + 1) z^{8} + (a^{4} + 5 a^{2} + 6 + 2 a^{-2}) z^{6} \\
             &- (2 a^{4} + 8 a^{2} + 11 + a^{-4} + 6 a^{-2}) z^{4} \\
             &+ (4 a^{-2} + a^{4} + 4 a^{2} + 6 + a^{-4}) z^{2} + 1,
\end{align*}
and an application of~\cite[Lemma 1.5]{Prz} shows that
\[P_{K}(a,z) - P_{K}(a^{-1},z) \equiv 0 \pmod{(z^{7}, 7z)},\]
hence \(K\) satisfies~\ref{item:Przytycki-Traczyk-criterion}.

Consider the \emph{Khovanov polynomial} of \(K\)
\[\operatorname{KhP}_{K}(t,q) = \sum_{i,j} t^{i} q^{j} \dim_{\Q} \operatorname{Kh}^{i,j}(K;\Q),\]
where \(\operatorname{Kh}(K;\Q)\) is the Khovanov homology of \(K\).
Using the knotkit software~\cite{KnotKit-WP, KnotKit} we obtain
\begin{equation}
  KhP_{K}(t,q) = q^{-1} + q + (1 + t q^{4}) (\mathcal{P}_{1}(t,q) + 6 \mathcal{P}_{2}(t,q)), \label{eq:decomposition-kh-criterion}
\end{equation}
for
\begin{align*}
  \mathcal{P}_{1}(t,q) &= t^{-7} q^{-13} + 4 t^{-6} q^{-11} + t^{-5} q^{-9} + 3 t^{-4} q^{-7} + t^{-3} q^{-7} + 5 t^{-3} q^{-5} \\
             &+ 4 t^{-2} q^{-5} + 5 t^{-2} q^{-3} + t^{-1} q^{-3} + 3 t^{-1} q^{-1} + 5 t q + 4 t q^3 + 5 t^2 q^3 \\
             &+ t^2 q^5 + 3 t^3q^5 + t^4q^7 + 4 t^5q^9 + t^6q^{11} + 3 q^{-1} + q,  \\
  \mathcal{P}_{2}(t,q) &= t^{-5} q^{-9} + t^{-4} q^{-7} + t^{-3} q^{-5} + t^{-2} q^{-3} + t^{-1} q^{-3} \\
             &+ t^{-1} q^{-1} + t q + t^2q^3 + t^3q^5 + t^4q^7 + q^{-1} + q.
\end{align*}
Using decomposition~\eqref{eq:decomposition-kh-criterion} we can verify that \(K\) satisfies~\ref{item:Kh-criterion}.

On the other hand, if \(\varphi\) is the framing with \(\fr(\varphi) = 0\), we obtain
\begin{align*}
  J_{K^{(2)}_{\varphi}}(q) - (q+q^{-1}) J_{K}(q)^{2} &\equiv 2 - 2q^{-2} + 2q^{-4} - 2q^{-6} + 2q^{-8} - 2q^{-10} + 2q^{-12} \\
                                                 &\neq 0 \pmod{(7, q^{14}-1)},
\end{align*}
hence by Corollary~\ref{cor:JoThm}, \(K\) is not \(7\)-periodic.

\section{Limitations}
\label{sec:limitations}

Recall that periodicity criteria~\ref{item:Przytycki-Traczyk-criterion} and~\ref{item:Kh-criterion} do not provide any information for periods \(2\), \(3\) and \(4\).
Unfortunately, the periodicity criterion from Corollary~\ref{cor:JoThm} has a similar limitation, i.e., it does not work for periods \(2\) and \(3\).
The goal of this section is to give a proof of this fact.

Let us start with the following lemma.
The lemma is well-known to the experts, but we include the proof in order to avoid confusion resulting from different conventions existing in the literature.
\begin{lemma}[Corollary~13, Theorem~14 and Theorem~15 from~\cite{jonesPolynomialInvariantKnots1985}]\label{lemma:special-values}
  Let \(L\) be a link with \(c(L)\) components.
  \begin{enumerate} 
  \item \label{item:special-values-1} Suppose that \(q \in \C\) satisfies \(q^{6}=1\).
    Then \(J_{L}(q)=J_{U_{c(L)}}(q)\), where \(U_{c(L)}\) is the \(c(L)\)-component unlink.
  \item \label{item:special-values-2} If \(q = i\), where \(i^{2}=-1\), then
    \(J_{L}(q) = \Delta_{L}(-1),\)
    where \(\Delta_{L}\) denotes the symmetrized Alexander polynomial.
  \end{enumerate}
\end{lemma}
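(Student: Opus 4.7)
The plan is to prove both parts by the same template: for each specialisation of $q$ I exhibit an invariant $F$ with $F(U)=1$ and the claimed closed form, verify that $F$ satisfies the Jones skein relation \eqref{eq:Jo-skein-relation} at that value of $q$, and then invoke the uniqueness of the link invariant determined by the skein and its value on the unknot. Uniqueness is established by induction: rewriting the skein as $J_{L_{+}} = q^{4}J_{L_{-}} + (q-q^{3})J_{L_{0}}$, a descending diagram presents every link as a sequence of crossing changes starting from an unlink, so the invariant is determined recursively from its value on $U$ together with the values on unlinks $U_{c}$, which in turn are forced by applying the kink-skein to produce the polynomial identity $J_{U_{c}}(q)=(q+q^{-1})^{c-1}$.

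For part \ref{item:special-values-1} I would take $F(L)=(\omega+\omega^{-1})^{c(L)-1}$ with $\omega^{6}=1$. Since a crossing change preserves $c(L)$ while the oriented resolution changes it by $\pm 1$, verifying the skein for $F$ reduces after a short algebraic manipulation to the identity $(\omega+\omega^{-1})^{\pm 2}=1$. A direct computation yields $\omega+\omega^{-1}\in\{1,-1\}$ at every primitive third or sixth root of unity, so the identity holds. At the remaining points $\omega=\pm 1$ the uniqueness template degenerates because $\omega^{-1}-\omega=0$; here I would instead observe that the skein itself forces $J_{L_{+}}(\pm 1)=J_{L_{-}}(\pm 1)$, so $J_{L}(\pm 1)$ is invariant under crossing changes, and equality with $J_{U_{c(L)}}(\pm 1)$ follows by reducing $L$ to $U_{c(L)}$ through an unknotting sequence.

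For part \ref{item:special-values-2}, I take $F(L)=\Delta_{L}(-1)$, evaluated with the convention $t^{1/2}=i$ on the symmetrised Alexander polynomial. The specialisation of \eqref{eq:Jo-skein-relation} at $q=i$ and the specialisation of the Conway skein at $t=-1$, $t^{1/2}=i$, both reduce to
\[F_{L_{+}} - F_{L_{-}} = 2i\, F_{L_{0}},\]
and both invariants equal $1$ on the unknot, so the uniqueness template completes the proof.

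The main technical obstacle throughout is the degenerate case $\omega=\pm 1$ in part \ref{item:special-values-1}; there the Laurent-polynomial identity $J_{U_{c}}(q)=(q+q^{-1})^{c-1}$ must be derived at generic $q$ first and specialised at the end, rather than being reconstructed from the skein at the specific root of unity.
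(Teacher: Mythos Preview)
Your argument is correct and follows essentially the same route as the paper: both parts are handled by showing the candidate invariant satisfies the specialized skein relation and agrees with $J$ on the unknot, with the degenerate case $q=\pm 1$ treated separately via crossing-change invariance; the paper simply checks each sixth root of unity by hand (using reality of $J_{L}$ to cover conjugate roots) rather than packaging the computation as $(\omega+\omega^{-1})^{2}=1$. One small slip: with your convention $t^{1/2}=i$ the Conway skein at $t=-1$ actually gives $-2i$ on the right-hand side, not $+2i$; take $t^{1/2}=-i$ instead and the two relations match as you claim.
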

\begin{proof}
  The lemma is a consequence of the skein relation~\eqref{eq:Jo-skein-relation}.
  In order to prove point~\ref{item:special-values-1} we will consider each case separately.
  \begin{itemize}
  \item For \(q=\pm 1\), the skein relation~\eqref{eq:Jo-skein-relation} becomes \(J_{L_{+}}(q)-J_{L_{-}}(q)= 0\), therefore the value of the Jones polynomial at \(q = \pm 1\) is invariant under crossing changes and the lemma follows.
  \item For \(q = e^{\frac{2\pi i}{3}}\) we get \(q J_{L_{+}}(q)-\bar{q} J_{L_{-}}(q)=(\bar{q}-q) J_{L_{0}}(q)\) and \(J_{U_{c(L)}}=(-1)^{c(L)-1}\), where \(\bar{q}\) denotes the complex conjugation of \(q\).
    Notice taking an orientation preserving resolution of a crossing changes the number of components by \(\pm 1\). Hence the lemma follows in this case by induction.
    Furthermore, since~\(J_{L}(q)\) is a real, it follows that \(J_{L}(\bar{q}) = J_{L}(q) = (-1)^{c(L)-1}\).
  \item For \(q = e^{\frac{\pi i}{3}}\), \(J_{U_{c(L)}}(q) = (q+q^{-1})^{c(L)-1}=1^{c(L)-1}=1\).
    The skein relation~\eqref{eq:Jo-skein-relation} reads \(\bar{q}^{2} J_{+}(q)- q^{2} J_{-}(q)=(\bar{q} - q) J_{0}(q)\).
    If follows easily that \(J_{L}(q) = 1\) satisfies the skein relation (remember that \(q+\bar{q}=1\)).
    Once more, since \(J_{L}(q)\) is real, we have \(J_{L}(\bar{q}) = J_{L}(q) = 1\).
  \end{itemize}

  For the proof of point~\ref{item:special-values-2} recall that \(\Delta_{L}(t) \in \Z[t^{\pm1/2}]\) is characterized by skein relation
  \[\Delta_{L_{+}}(t) - \Delta_{L_{-}}(t) = (t^{-1/2}-t^{1/2}) \Delta_{L_{0}}(t)\]
  together with the condition \(\Delta_{U}(t) = 1\).
  Comparison of the skein relations for \(J_{L}(i)\) and \(\Delta_{L}(-1)\) yields the desired result.
\end{proof}

Before stating the next result, recall from Corollary~\ref{cor:JoThm} that for a prime \(p\), we denote by \(\mathcal{I}_{p}\) the ideal in \(\Z[q^{\pm1}]\) generated by \(p(q^2-1)\) and \((q^2-1)^p\).

\begin{theorem}
  Let \(K\) be a knot and suppose that \(\varphi\) is a framing of \(K\) such that \(\fr(\varphi)\) is even.
  For any \(m>1\) we have
  \[J_{K^{(m)}_{\varphi}}(q) \equiv (q+q^{-1})^{m-1} J_{K}(q)^{m} \pmod{\mathcal{I}_{2}}.\]
\end{theorem}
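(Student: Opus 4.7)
My plan is to reduce the desired congruence modulo $\mathcal{I}_{2}$ (the Jones-polynomial ideal from Corollary~\ref{cor:JoThm}, generated by $2(q^{2}-1)$ and $(q^{2}-1)^{2}$) to three numerical checks. The key observation is that $q^{4}-1 = (q^{2}-1)^{2} + 2(q^{2}-1) \in \mathcal{I}_{2}$, so every element of $R := \Z[q^{\pm 1}]/\mathcal{I}_{2}$ can be evaluated at any fourth root of unity. I would first show that the evaluation map
\[
\Phi \colon R \longrightarrow \Z \times \Z \times \Z[i]/(4),
\qquad
f \longmapsto \bigl(f(1),\, f(-1),\, f(i) \bmod 4\bigr),
\]
is injective. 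If $f(\pm 1)=0$ then $f = (q^{2}-1)\, g$ in $\Z[q^{\pm 1}]$; then $f(i) \in 4\Z[i]$ translates to $g(i) \in 2\Z[i]$, which by separating the even- and odd-indexed coefficients of $g$ gives $g(1)\equiv g'(1)\equiv 0 \pmod 2$, i.e., $g \in (2, q^{2}-1)$, and hence $f \in \mathcal{I}_{2}$.

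Applying $\Phi$ to both sides of the congruence, the nontrivial content collapses. By Lemma~\ref{lemma:special-values}(\ref{item:special-values-1}), $J_{L}(\pm 1) = (\pm 2)^{c(L)-1}$, so at $q = \pm 1$ both sides equal $(\pm 2)^{m-1}$. At $q = i$, the right-hand side vanishes since $i + i^{-1} = 0$ and $m > 1$, while Lemma~\ref{lemma:special-values}(\ref{item:special-values-2}) identifies the left-hand side with $\Delta_{K^{(m)}_{\varphi}}(-1)$. Hence the entire theorem reduces to the single divisibility
\[
\Delta_{K^{(m)}_{\varphi}}(-1) \equiv 0 \pmod{4} \qquad \text{in } \Z[i].
\]

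For $m \geq 3$ this is automatic: the Conway polynomial $\nabla_{L}(z)$ of any $c$-component link is divisible by $z^{c-1}$, so $\Delta_{L}(-1) = \nabla_{L}(\pm 2i)$ already lies in $(2i)^{c-1}\Z[i] \subseteq 4\Z[i]$ whenever $c \geq 3$. The main obstacle is the case $m = 2$, which is the only place where the evenness of $\fr(\varphi)$ genuinely enters. With the orientation convention of Section~\ref{sec:periodic-cables}, the two components of $K^{(2)}_{\varphi}$ are oppositely oriented and hence together bound the embedded annulus $A = \varphi(S^{1} \times [0, 1/2]) \subset N(K)$. Using $A$ as a Seifert surface, its only homology generator is the core, whose $\varphi$-pushoff has linking number $\fr(\varphi)$ with it, so the Seifert matrix is the $1 \times 1$ matrix $(\fr(\varphi))$. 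This yields $\Delta_{K^{(2)}_{\varphi}}(t) = \fr(\varphi)\bigl(t^{1/2} - t^{-1/2}\bigr)$ up to the usual $\pm t^{k/2}$ normalization, and therefore $\Delta_{K^{(2)}_{\varphi}}(-1) = \pm 2i \cdot \fr(\varphi) \in 4\Z[i]$ precisely because $\fr(\varphi)$ is even. Combining the three evaluations finishes the proof.
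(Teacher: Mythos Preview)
Your proof is correct and follows essentially the same route as the paper: reduce the congruence to evaluations at the fourth roots of unity, dispose of $q=\pm1$ via the special-values lemma, and at $q=i$ translate into the divisibility $\Delta_{K^{(m)}_{\varphi}}(-1)\equiv 0\pmod 4$, handling $m\geq 3$ by the $z^{c-1}$/$(t-1)^{c-1}$ divisibility and $m=2$ by the annulus Seifert surface. Your explicit verification that the evaluation map $\Z[q^{\pm1}]/\mathcal{I}_{2}\hookrightarrow \Z\times\Z\times\Z[i]/(4)$ is injective is in fact more careful than the paper's, which only argues that $\Z[q^{\pm1}]/(q^{4}-1)\hookrightarrow \Z\times\Z\times\Z[i]$ via rationalization and leaves the passage to the further quotient by $2(q^{2}-1)$ implicit.
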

\begin{proof}
  Let \(R = \Z[q^{\pm1}] / (q^{4}-1)\).
  Observe that
  \[\mathcal{J}_2 = \left(2 (q^2-1), (q^2-1)^2 \right) = \left(2 (q^2-1), (q^{2}-1)^{2}+2(q^{2}-1)\right) = \left( 2(q^2-1), q^4-1 \right).\]
  Consider the following epimorphisms
  \[p_{1,\pm} \colon R \to \Z, \quad p_{2} \colon R \to \Z[i],\]
  where \(p_{1,\pm}(q) = \pm1\) and \(p_{2}(q) = i\), where \(i^{2}=-1\).
  Observe that the map \(r = p_{1,+} \oplus p_{1,-} \oplus p_{2}\) is an injection.
  Indeed, injectivity of \(r\) follows from the fact that \(r \otimes \Q\) is injective.
  Therefore, it is sufficient to check that
  \begin{equation}\label{eq:first-congruence}
    J_{K^{(m)}_{\varphi}}(q) = (q+q^{-1})^{m-1} J_{K}(q)^{m}, \quad \text{for } q = \pm1,
  \end{equation}
  and the following congruence holds in \(\Z[i]\)
  \begin{equation}\label{eq:second-congruence}
    J_{K^{(m)}_{\varphi}}(i) \equiv (i+i^{-1})^{m-1} J_{K}(i)^{m} = 0 \pmod{4}.
  \end{equation}
  
  Equality~\ref{eq:first-congruence} is satisfied by Lemma~\ref{lemma:special-values}.
  To deal with~\ref{eq:second-congruence}, recall from Lemma~\ref{lemma:special-values}, that for any link \(L\) we have
  \[J_{L}(i) = \Delta_{L}(-1),\]
  where \(\Delta_{L}(t)\) denotes the symmetrized Alexander polynomial.
  Furthermore, by~\cite{Ho}, \(\Delta_{L}(t)\) is divisible by \((t-1)^{c(L)-1}\).
  Consequently, for \(m>2\), \(J_{K^{(m)}_{\varphi}}(i) = \Delta_{K^{(m)}_{\varphi}}(-1)\) is divisible by \(4\).
  For \(m=2\) it is easy to check that \(K^{(2)}_{\varphi}\) bounds a Seifert surface \(S\) of genus~\(0\) (recall that the components of \(K^{(2)}_{\varphi}\) are oppositely oriented).
  A simple calculation with the aid of the Seifert matrix of \(S\) yields \(\Delta_{K^{(2)}_{\varphi}}(t) = \pm \fr(\varphi)(t-1)\).
  Since \(\fr(\varphi)\) is even, \(\Delta_{K^{(2)}_{\varphi}}(-1)\) is divisible by \(4\), hence the theorem follows.
\end{proof}

\begin{theorem}\label{thm:ineffective for 3}
  Let $K$ be a knot and let \(\varphi\) be an arbitrary framing of \(K\).
  For any \(m \geq 2\), we have
  \[J_{K^{(m)}_{\varphi}}(q) \equiv (q+q^{-1})^{m-1}J_{K}(q)^{m} \pmod{\mathcal{I}_{3}}.\]
\end{theorem}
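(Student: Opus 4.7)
The plan is to show that the asserted congruence in fact holds modulo the smaller ideal $(q^6-1) \subseteq \mathcal{J}_3$. The inclusion itself follows from the algebraic identity
\[q^6 - 1 = (q^2-1)^3 + 3q^2(q^2-1),\]
since the first summand is one generator of $\mathcal{J}_3$ and the second is $q^2$ times the other generator $3(q^2-1)$. Consequently, any congruence modulo $q^6-1$ implies the desired congruence modulo $\mathcal{J}_3$.

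Next I would invoke part~\ref{item:special-values-1} of Lemma~\ref{lemma:special-values}, which states that for any link $L$ and any $\zeta\in\C$ with $\zeta^6=1$ one has $J_L(\zeta)=J_{U_{c(L)}}(\zeta)=(\zeta+\zeta^{-1})^{c(L)-1}$. Applied to $L=K^{(m)}_\varphi$, whose component count is $m$, this gives $J_{K^{(m)}_\varphi}(\zeta)=(\zeta+\zeta^{-1})^{m-1}$; applied to $K$ it gives $J_K(\zeta)=1$. Hence both sides of the congruence take the common value $(\zeta+\zeta^{-1})^{m-1}$ at every sixth root of unity, so the difference
\[D(q) := J_{K^{(m)}_\varphi}(q) - (q+q^{-1})^{m-1}\, J_K(q)^{m} \in \Z[q^{\pm1}]\]
vanishes at every root of the monic integer polynomial $q^6-1$. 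Standard divisibility (working in $\Q[q,q^{-1}]$ and clearing the monomial denominator, then noting that the quotient by a monic integer polynomial is integral) then yields $D(q)\in(q^6-1)\subseteq \mathcal{J}_3$, which is the desired congruence.

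The only non-routine step is spotting the inclusion $(q^6-1)\subseteq\mathcal{J}_3$; once that is in hand, the rest of the argument is a direct application of the known evaluations of the Jones polynomial at sixth roots of unity. It is worth emphasising that the argument uses no hypothesis on the framing $\varphi$ or on periodicity of $K$, which is exactly the ineffectiveness statement: the congruence of Corollary~\ref{cor:JoThm} is automatic for every knot when $p^n=3$, so it can never rule out $3$-periodicity.
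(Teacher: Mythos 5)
Your proof is correct and follows essentially the same route as the paper: both reduce to showing the difference vanishes at all sixth roots of unity (the paper via the embedding $\Z[q^{\pm1}]/(q^{6}-1)\hookrightarrow \Z^{2}\oplus\Z[\xi_{3}]\oplus\Z[\xi_{6}]$, you via divisibility by the monic polynomial $q^{6}-1$) and then invoke Lemma~\ref{lemma:special-values}\ref{item:special-values-1} to see that both sides equal $(q+q^{-1})^{m-1}$ there. Your explicit verification of the inclusion $(q^{6}-1)\subseteq\mathcal{J}_{3}$ via the identity $q^{6}-1=(q^{2}-1)^{3}+3q^{2}(q^{2}-1)$ is a step the paper leaves implicit, and it is a welcome addition.
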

\begin{proof}
  Let \(R = \Z[q^{\pm1}] / (q^6-1)\).
  Observe that
  \[\mathcal{I}_3 = \left(2 (q^2-1), (q^{2}-1)^{3}+3(q^{2}-1)q^{2} \right) = \left(3 (q^2-1), (q^6-1)\right).\]
  As in the proof of the previous theorem, there is an embedding
  \[\Z[q^{\pm1}] / (q^{6}-1) \hookrightarrow \Z \oplus \Z \oplus \Z[\xi_{3}] \oplus \Z[\xi_{6}],\]
  mapping \(q\) to \((1,-1,\xi_3, \xi_6)\).
  where \(\xi_{s} = \exp \left(\frac{2\pi i}{s} \right)\), for \(s \in \Z\).
  Hence, it is sufficient to check the congruence
  \[J_{K^{(m)}_{\varphi}}(q) \equiv (q+q^{-1})^{m-1}J_{K}(q)^{m} \pmod{3(q^2-1)},\]
  for \(q = \pm1, \xi_3, \xi_6\).
  Lemma~\ref{lemma:special-values} implies that both sides of the congruence are equal to \(J_{U_{m}}=(q+q^{-1})^{m-1}\), hence the lemma follows.
\end{proof}

\appendix

\section{The code}
\label{sec:code}
The purpose of this section is to present the Sage~\cite{sagemath} code we used to verify perform calculations in Section~\ref{sec:comp-with-other}.
The code consists of two functions presented in Listing~\ref{lst:cable-function} and Listing~\ref{lst:cable-difference-function}.

\begin{lstlisting}[breaklines,language=Python,caption={The function \emph{Cable} constructs cable of a given braid with respect to the blackboard framing.},label=lst:cable-function,frame=single]
def cable(m,n,arr):
    B = BraidGroup(n*m)
    arrnew = []
    for i in arr:
        if i < 0:
            i = -i
            mult = -1
        else:
            mult = 1
        for j in xrange(1, m+1):
            for k in xrange(1, m+1):
                arrnew.append(mult*((i*m)-k+j))
    return B((arrnew))
\end{lstlisting}

The function presented in Listing~\ref{lst:cable-function} constructs the \(m\)-cable of a given braid \(b \in B_{n}\) on \(n\)-strands with respect to the blackboard framing.
The function takes three parameters as an input: an integer \(m\), an integer \(n\) and a list \(arr\) of integers.
The list \(arr\) encodes the braid \(b\).
Namely, if \(arr = (a_0,a_1,\ldots,a_k)\), where \(-n+1 \leq a_{i} \leq n-1\), for \(i=1,2,\ldots,k\), then
\[b = \sigma_{|a_0|}^{\sgn a_{0}} \cdot \sigma_{|a_1|}^{\sgn a_{1}} \cdot \ldots \cdot \sigma_{|a_{k}|}^{\sgn a_{k}} \in B_{n},\]
where \(\sigma_i\), for \(i=1,2,\ldots,n-1\) denotes the standard generators of \(B_{n}\).
The function replaces every occurence of $\sigma_{i}$, for \(i=1,2,\ldots,n-1\), in \(b\) by the braid
\[\prod_{j=0}^{m-1} \left( \sigma_{im+j} \cdot \sigma_{im+1+j} \cdot \ldots \cdot \sigma_{(i+1)m+j} \right) \in B_{nm}.\]

\begin{lstlisting}[breaklines,language=Python,numbers=left,label=lst:cable-difference-function,caption={The function \emph{Cable\textunderscore difference} verifies periodicity criterion from Corollary~\ref{cor:JoThm}. },frame=single]
def cable_difference(p,m,n,arr):
    R.<A> = LaurentPolynomialRing(ZZ)
    B = BraidGroup(n)
    K = Link(B(arr))
    assert K.is_knot()
    w = K.writhe() % p
    if (2*w) < p:
        for i in xrange(w):
            arr.append(-n-i)
        n = n+w
    else:
        for i in xrange(p-w):
            arr.append(n+i)
        n = n+p-w
    B = BraidGroup(n)
    J1 = K.jones_polynomial(skein_normalization=True)
    J2 = Cable(m,n,arr).jones_polynomial(skein_normalization=True)
    w = 6*(L.writhe())*(m//2)
    J2 = J2*(A^w)
    diff = ((-A^2-A^(-2))^(m-1)) * (J1)^m - J2
    diffp = diff.polynomial_construction()[0]
    diffp = diffp.quo_rem(A^(4*p)-1)[1]
    diffp = diffp.quo_rem(p)[1]
    return diffp
\end{lstlisting}

The function from Listing~\ref{lst:cable-difference-function} implements periodicity criterion from Corollary~\ref{cor:JoThm}.
It takes as an input an integer~\(p\) (the potential period), an integer~\(m\) (the multiplicity of the cable) and an integer~\(n\) together with a list of integers \(arr\) which encodes a braid on \(n\)-strands representing a knot \(K\).
The function operates not on the Jones polynomial itself, but on the oriented Kauffman bracket version thereof
\[\widetilde{J}_{K}(A) = A^{-6w} \langle K \rangle \in \Z[A^{\pm1}] = J_{K}(A^{2}),\]
hence it returns the difference
\[\widetilde{J}_{K^{(m)}}(A) - (-A^{2}-A^{-2})^{m-1} \widetilde{J}_{K}^{m}(A) \pmod{(p,A^{4p}-1)}\]
As the first step, the function fixes the framing (lines \(6\)-\(14\)) by performing an appropriate number of Reidemeister I moves (in the form of Markov moves on the braid) so that the writhe is divisible by \(p\), see Corollary~\ref{lem:admissible-framings}.
Finally, it performs the periodicity test and returns the result.

\bibliographystyle{abbrv}
\bibliography{bibliography}

\end{document}